\newtheorem{prop}{Proposition}
\theoremstyle{remark}
\newtheorem*{cor}{\bf Corollary}
\newtheorem*{example}{Example}
\theoremstyle{definition}
\begin{document}

\pagestyle{plain}

\title{Compositions of Graphs Revisited} \author{Aminul Huq}

\email{aminul@brandeis.edu}
\urladdr{people.brandeis.edu/$\sim$aminul}

\maketitle
\begin{abstract}

The idea of graph compositions, which was introduced by A.
Knopfmacher and M. E. Mays, generalizes both ordinary compositions
of positive integers and partitions of finite sets. In their
original paper they developed formulas, generating functions, and
recurrence relations for composition counting functions for
several families of graphs. Here we show that some of the results
involving compositions of bipartite graphs can be derived more
easily using exponential generating functions.

\end{abstract}

{\it Keywords:} compositions, bipartite graph, stirling number.

{\it AMS classification:} 05A05, 05C30, 05A15, 05A18.

\section{Introduction}
A composition of a graph $G$ is a partition of the vertex set of
$G$ into vertex sets of connected induced subgraphs of $G$.
Knopfmacher and Mays \cite{aa} found an explicit formula for
$C(K_{m,n})$, the number of compositions of the complete bipartite
graph $K_{m,n}$ in the form
\begin{equation}
C(K_{m,n})= \sum_{i=1}^{m+1} a_{m,i} i^n.
\end{equation}
where $(a_{i,j})$ is an array defined via the recurrences
$a_{m,0}=0$ for any nonnegative integer $m$, $a_{0,1}=1$,
$a_{0,n}=0$ for any $n>1$, and otherwise
\[a_{m,n}=\sum_{i=0}^{m-1}\binom{m-1}{i} a_{m-1-i,n-1}-\sum_{i=1}^{m-1}\binom{m-1}i a_{m-1-i,n-1}.\]

We will derive this result using exponential generating functions
and also show that we can express the coefficients $a_{m,i}$
explicitly in terms of the Stirling numbers of the second kind. We
first need to describe some basic properties of exponential
generating functions in two variables. We will use Stanley's
notation \cite{cc} throughout this paper.

\section{Exponential generating function in two
variables}

\begin{prop}\label{pr1} Given functions $ f,g : \mathbb{N}
\times \mathbb{N} \rightarrow K$, where $K$ is a field of
characteristic $0$, we define a new function $h:\mathbb{N} \times
\mathbb{N} \rightarrow K$ by
\[ h(\#X,\#Y)=\sum f(\#S,\#T)g(\#U,\#V) \] where $X$ and $Y$ are
finite sets and the sum is over all $S,T,U$ and $V$ such that $X=S
\uplus U$ and $Y=T \uplus V$; i.e., $X$ and $Y$ are disjoint
unions of $S,U$ and $T,V$ respectively.

Then
\begin{equation}\label{a-1}
E_h(x,y)=E_f(x,y)E_g(x,y),
\end{equation}
where the exponential generating function of $f$ is defined by
\[E_f(x,y)=\sum_{m,n=0}^{\infty} f(m,n)\dfrac{x^m}{m!}
\dfrac{y^n}{n!}.\]
\end{prop}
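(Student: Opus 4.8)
The plan is to compute the product $E_f(x,y)E_g(x,y)$ directly as a formal power series and match coefficients with the definition of $E_h$. Writing
\[
E_f(x,y)E_g(x,y)=\left(\sum_{s,t\ge 0} f(s,t)\frac{x^s}{s!}\frac{y^t}{t!}\right)\left(\sum_{u,v\ge 0} g(u,v)\frac{x^u}{u!}\frac{y^v}{v!}\right),
\]
I would collect the coefficient of $\frac{x^m}{m!}\frac{y^n}{n!}$ on the right-hand side. Multiplying out, the coefficient of $x^m y^n$ is $\sum f(s,t)g(u,v)/(s!\,t!\,u!\,v!)$ summed over $s+u=m$, $t+v=n$; reinserting the factor $m!\,n!$ turns the weights into $\binom{m}{s}\binom{n}{t}$, so the coefficient of $\frac{x^m}{m!}\frac{y^n}{n!}$ in the product is
\[
\sum_{s+u=m}\ \sum_{t+v=n} \binom{m}{s}\binom{n}{t} f(s,t)\,g(u,v).
\]

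The key step is then to recognize this as exactly $h(m,n)$ as defined in the statement. Here I would invoke the standard combinatorial interpretation of the binomial coefficients: $\binom{m}{s}$ counts the ways to choose the subset $S\subseteq X$ with $\#S=s$ (the complement being $U$ with $\#U=m-s$), and similarly $\binom{n}{t}$ counts the choices of $T\subseteq Y$ with complement $V$. Since $f$ and $g$ depend only on the cardinalities $\#S,\#T,\#U,\#V$, summing $f(\#S,\#T)g(\#U,\#V)$ over all ordered decompositions $X=S\uplus U$, $Y=T\uplus V$ groups naturally by the sizes $(s,t,u,v)$ with $s+u=m$, $t+v=n$, each size-vector occurring $\binom{m}{s}\binom{n}{t}$ times. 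Hence $h(m,n)$ equals the displayed double sum, which equals the coefficient extracted from the product, giving \eqref{a-1}.

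The main obstacle is not any hard estimate but making the bookkeeping airtight: I must be careful that the sum defining $h$ is over \emph{ordered} pairs of decompositions (so that $X=S\uplus U$ and $X=U\uplus S$ are counted separately, matching the asymmetric roles of $f$ and $g$), and that all series manipulations are purely formal, so no convergence issues arise over the characteristic-$0$ field $K$. I would also note explicitly that the rearrangement of the Cauchy-type product is legitimate because for each fixed $(m,n)$ only finitely many terms contribute. A clean way to present the whole argument is to first establish the one-variable identity $\frac{1}{m!}=\sum_{s+u=m}\frac{1}{s!\,u!}\binom{m}{s}^{-1}\cdot\binom{m}{s}\frac{1}{m!}$—or more simply just cite the well-known one-variable product rule for exponential generating functions—and then observe that the two-variable case factors as a product of two independent one-variable convolutions, one in $x$ and one in $y$. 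This reduces the proof to a short remark once the notation is in place.
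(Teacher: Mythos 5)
Your proposal is correct and follows essentially the same route as the paper: both arguments reduce to the observation that grouping the ordered decompositions $X=S\uplus U$, $Y=T\uplus V$ by the sizes of $S$ and $T$ gives $h(m,n)=\sum_{i,j}\binom{m}{i}\binom{n}{j}f(i,j)g(m-i,n-j)$, which is precisely the coefficient of $\frac{x^m}{m!}\frac{y^n}{n!}$ in $E_f(x,y)E_g(x,y)$. Your version merely spells out the coefficient extraction from the Cauchy product more explicitly than the paper does.
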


\begin{proof} If $\#X=m$ and $\#Y=n$, then there are
$\binom{m}{i}$ pairs $(S,U)$ and $\binom{n}{j}$ pairs $(T,V)$,
where $X=S \uplus U$ and $Y=T \uplus V$ with $\#S=i, \#T=j,
\#U=m-i$ and $\#V=n-j$. Then $h(m,n)$ is given by
\[h(m,n)=\sum_{i,j=0}^{m,n} \binom{m}{i} \binom{n}{j} f(i,j)
g(m-i,n-j)\] and this is equivalent to \eqref{a-1}.
\end{proof}

\begin{cor}\label{co1} Given functions $f_1,\dots,f_k :\mathbb{N}
\times \mathbb{N} \rightarrow K$, we can define a new function
$h:\mathbb{N} \times \mathbb{N} \rightarrow K$ by
\[ h(\#X,\#Y)=\sum \prod_{i=1}^{k} f_i(\#S_i,\#T_i) \] where the
sum is over all $(S_1,\dots,S_k)$ and $(T_1,\dots,T_k)$ such that
$X=\uplus_{i=1}^{k} S_i$ and $Y=\uplus_{i=1}^{k} T_i$. Then
\begin{equation}\label{a-2} E_h(x,y)=\prod_{i=1}^{k} E_{f_i}(x,y).
\end{equation}
\end{cor}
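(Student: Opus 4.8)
The plan is to prove the statement by induction on $k$, with Proposition~\ref{pr1} serving as both the base case and the main tool in the inductive step. When $k=1$ the claim is the tautology $E_h = E_{f_1}$, and when $k=2$ it is exactly Proposition~\ref{pr1}; so suppose the result is already known for collections of $k-1$ functions.

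For the inductive step I would first bundle the first $k-1$ factors into a single auxiliary function $g:\mathbb{N}\times\mathbb{N}\rightarrow K$, defined by
\[ g(\#S,\#T)=\sum \prod_{i=1}^{k-1} f_i(\#S_i,\#T_i), \]
where the sum ranges over all $(S_1,\dots,S_{k-1})$ and $(T_1,\dots,T_{k-1})$ with $S=\uplus_{i=1}^{k-1}S_i$ and $T=\uplus_{i=1}^{k-1}T_i$. Since the summand depends only on the cardinalities $\#S_i,\#T_i$, this value depends only on $\#S$ and $\#T$, so $g$ is well defined, and the induction hypothesis gives $E_g(x,y)=\prod_{i=1}^{k-1}E_{f_i}(x,y)$. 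The crux is then the observation that specifying an ordered $k$-part decomposition $X=S_1\uplus\cdots\uplus S_k$ is the same as first choosing the splitting $X=S\uplus U$ with $U=S_k$ and then choosing an ordered $(k-1)$-part decomposition $S=S_1\uplus\cdots\uplus S_{k-1}$ (and likewise for $Y$ with $V=T_k$). Re-indexing the sum defining $h$ according to this correspondence yields
\[ h(\#X,\#Y)=\sum g(\#S,\#T)\,f_k(\#U,\#V), \]
the sum being over all $X=S\uplus U$ and $Y=T\uplus V$. This is precisely the hypothesis of Proposition~\ref{pr1} applied to the pair of functions $g$ and $f_k$, so $E_h=E_g\,E_{f_k}=\prod_{i=1}^{k}E_{f_i}$, completing the induction.

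The only delicate point is this re-indexing step: one should check that the map sending $(S_1,\dots,S_k)$ to the pair $\bigl(S,(S_1,\dots,S_{k-1})\bigr)$ with $S=S_1\uplus\cdots\uplus S_{k-1}$ is a bijection from ordered $k$-part decompositions of $X$ onto pairs consisting of a subset $S\subseteq X$ together with an ordered $(k-1)$-part decomposition of $S$. This is routine set-theoretic bookkeeping, but it is where the argument actually lives, since it is what lets the product of generating functions telescope correctly; everything else is a direct appeal to Proposition~\ref{pr1}. (Alternatively one could avoid the induction entirely and imitate the proof of Proposition~\ref{pr1} directly, writing $h(m,n)$ as a double multinomial sum over the sizes $\#S_i$ and $\#T_i$ and recognizing it as the coefficient of $\frac{x^m}{m!}\frac{y^n}{n!}$ in $\prod_{i=1}^k E_{f_i}(x,y)$, but the inductive route reuses work already done and keeps the multinomial algebra to a minimum.)
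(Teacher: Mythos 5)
Your proof is correct, and it is essentially the argument the paper has in mind: the paper states this Corollary without proof, treating it as an immediate iteration of Proposition~\ref{pr1}, which is exactly what your induction on $k$ (bundling $f_1,\dots,f_{k-1}$ into an auxiliary $g$ and applying Proposition~\ref{pr1} to the pair $g,f_k$) makes explicit. Your attention to the well-definedness of $g$ and to the bijection between ordered $k$-part decompositions and the nested choices is the right bookkeeping, and nothing further is needed.
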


\begin{prop}\label{pr2} Given the function $ f : \mathbb{N} \times
\mathbb{N} \rightarrow K$, where $K$ is a field of characteristic
$0$ and $f(0,0)=0$, define a new function $h:\mathbb{N} \times
\mathbb{N} \rightarrow K$ such that for disjoint finite sets $X$
and $Y$,
\begin{equation}\label{a-3} h(\#X,\#Y)=\sum_{{\{S_1,\dots,S_k\}}}\prod_{i=1}^{k} f(\#(S_i\cap
X),\#(S_i\cap Y)). \end{equation} where the sum is taken over all
partitions $\{S_1,\dots,S_k\}$ of the set $X \cup Y$. Then
\[E_h(x,y)=\exp(E_f(x,y)).\]
\end{prop}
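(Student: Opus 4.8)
The plan is to reduce this to Corollary 1 by stratifying the sum in \eqref{a-3} according to the number of blocks $k$. Fix $k \geq 1$ and consider the contribution to $h(\#X,\#Y)$ coming from partitions into exactly $k$ blocks. An unordered partition $\{S_1,\dots,S_k\}$ into $k$ nonempty blocks corresponds to $k!$ ordered tuples $(S_1,\dots,S_k)$; however, because $f(0,0)=0$, we may freely allow empty blocks in an ordered decomposition without changing the sum. So I would introduce, for each $k$, the function $h_k$ defined by summing $\prod_{i=1}^k f(\#(S_i\cap X),\#(S_i\cap Y))$ over all ordered tuples $(S_1,\dots,S_k)$ with $\biguplus S_i = X\cup Y$; equivalently, writing $U_i = S_i\cap X$ and $V_i = S_i\cap Y$, this is exactly the situation of Corollary 1 with $f_1=\dots=f_k=f$, so $E_{h_k}(x,y) = E_f(x,y)^k$.

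Next I would argue that $h = \sum_{k\geq 0} \frac{1}{k!}\, h_k$, with the convention $h_0(\#X,\#Y)$ equal to $1$ when $X\cup Y=\varnothing$ and $0$ otherwise. The point is that an ordered tuple with all blocks nonempty is counted $k!$ times among the unordered partitions into $k$ blocks, and the hypothesis $f(0,0)=0$ kills every ordered tuple that has at least one empty block, so the $\frac{1}{k!}h_k$ really does collapse to the unordered sum over genuine $k$-block partitions. Summing the generating functions then gives
\[
E_h(x,y) = \sum_{k=0}^{\infty} \frac{1}{k!}\, E_f(x,y)^k = \exp\bigl(E_f(x,y)\bigr),
\]
where convergence is purely formal: since $f(0,0)=0$, the series $E_f(x,y)$ has zero constant term, so for any monomial $x^m y^n$ only finitely many powers $E_f(x,y)^k$ contribute.

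The main obstacle, and the step deserving the most care, is the bookkeeping that justifies passing between ordered tuples (where Corollary 1 applies directly) and unordered set partitions (what \eqref{a-3} actually demands). Concretely one must check that the map sending a surjective ordered decomposition with no empty parts to its underlying partition is exactly $k!$-to-one, and that the $f(0,0)=0$ hypothesis is precisely what lets us ignore the discrepancy between "ordered tuples covering $X\cup Y$" and "ordered tuples into nonempty parts." Once that combinatorial identity $h_k = k!\sum_{\text{$k$-block partitions}}(\cdots) + (\text{terms with a vanishing factor})$ is nailed down, the rest is the formal manipulation above together with a remark that the exponential is well defined as a formal power series. I would also note in passing that this recovers the usual one-variable exponential formula by setting $y=0$ (or $x=0$), which is a useful sanity check.
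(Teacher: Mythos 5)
Your proposal is correct and follows essentially the same route as the paper: fix the number of blocks $k$, use the $k!$ correspondence between unordered partitions and ordered tuples to invoke the Corollary to Proposition~\ref{pr1}, obtain $E_{h_k}(x,y)=E_f(x,y)^k/k!$, and sum over $k$. Your treatment is in fact slightly more careful than the paper's, since you make explicit both the role of $f(0,0)=0$ in discarding ordered tuples with empty parts and the formal convergence of $\sum_k E_f(x,y)^k/k!$, points the paper leaves implicit.
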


\begin{proof} Let $k$ be fixed. Then the blocks of the partition
$\{S_1, \dots ,S_k\}$ are all distinct and so are the pairs
${(S_i\cap X,S_i\cap Y)}$ for $i=1, \dots ,k$. So there are $k!$
ways of linearly ordering them. If we define $h_k(\#X,\#Y)$ by
\[ h_k(\#X,\#Y)=\sum_{{\{S_1,\dots,S_k\}}}\prod_{i=1}^{k} f(\#(S_i\cap
X),\#(S_i\cap Y))\] for a fixed value of $k$ then by the Corollary
to Proposition~\ref{pr1} we get
\[ E_{h_k} (x,y)= \dfrac{(E_f(x,y))^k}{k!} \]
Therefore summing over all $k \ge 0$ gives the desired result.
\end{proof}

\begin{example} Let $X$ and $Y$ be disjoint sets with $\#X=m$ and
$\#Y=n$ and let $C(m,n)$ be the number of connected bipartite
graphs between the sets $X$ and $Y$. Then
\begin{equation}
\label{ex} \exp\biggl(\sum_{m,n=0}^{\infty} C(m,n) \dfrac{x^m}{m!}
\dfrac{y^n}{n!}\biggr)=\sum_{m,n=0}^{\infty} 2^{mn}
\dfrac{x^m}{m!} \dfrac{y^n}{n!}.
\end{equation}

This can be seen easily because the coefficient of
$\dfrac{x^m}{m!} \dfrac{y^n}{n!}$ in the right-hand side of
\eqref{ex} is the number of bipartite graphs with bipartition
$(X,Y)$. On the other hand the number of such graphs in which the
vertex sets of the connected components are $\{S_1, S_2,
\dots,S_k\}$ is $\prod_{i=1}^{k} C(\#(S_i\cap X),\#(S_i\cap Y))$.
So summing over all partition $\{S_1, S_2, \dots,S_k\}$ of $X \cup
Y$ and applying Proposition \ref{pr2} shows that the number of
bipartite graphs with bipartition $(X,Y)$ is the coefficient of
$\dfrac{x^m}{m!} \dfrac{y^n}{n!}$ in the left-hand side.
\end{example}

\section{Compositions of bipartite graphs}

Let $G$ be a labelled graph with vertex set $V(G)$. A composition
of $G$ is a partition of $V(G)$ into vertex sets of connected
induced subgraphs of $G$. Thus a composition provides a set of
connected induced subgraphs of $G$, $\{G_1,G_2,\cdots,G_m\}$, with
the properties that $\bigcup_{i=1}^m V(G_i) = V(G)$ and for $i\ne
j, V(G_i)\bigcap V(G_j) =\emptyset$.

Let $C(G)$ denote the number of distinct compositions of the graph
$G$. For example, the complete bipartite graph $K_{2,3}$ has
exactly 34 compositions. In this section we will consider complete
bipartite graph only.

Consider a function $f:\mathbb{N} \times \mathbb{N} \rightarrow
\mathbb{Q}$ as follows: Given $m,n \in \mathbb{N}$ we define
\[f(m,n)=\begin{cases} 1, & \textrm{if $m>0$ and $n>0$}
\\ & \textrm{or $m=1$ and $n=0$} \\ & \textrm{or $m=0$ and $n=1
$}\\
  0, & \textrm{otherwise.}
\end{cases}\]
In other words $f(m,n)=1$ if $K_{m,n}$ is connected and $0$ if
$K_{m,n}$ is not connected. We also define $h:\mathbb{N} \times
\mathbb{N} \rightarrow \mathbb{Q}$ by
\[h(m,n)=\sum_{\{S_1,\dots,S_k\}} \prod_{i=1}^{k}f(\#(S_i \cap X),\#(S_i \cap
Y)).\] where $\#X=m$ and $\#Y=n$. Then $h(m,n)=C(K_{m,n})$ and
thus by Proposition \ref{pr2} we get
\begin{equation}
\label{e-01} \sum_{m,n=0}^{\infty} C(K_{m,n})
\dfrac{x^m}{m!}\dfrac{y^n}{n!}=\exp(E_f(x,y)).
\end{equation}
But from the definition of $E_f(x,y)$ we have
\begin{align*}
E_f(x,y)=\sum_{m,n=0}^{\infty} f(m,n) \dfrac{x^m}{m!}
\dfrac{y^n}{n!}
        &= \sum_{m,n=1}^{\infty} \dfrac{x^m}{m!}
\dfrac{y^n}{n!} +x+y\\
        &= \sum_{m=1}^{\infty}
\dfrac{x^m}{m!} \sum_{n=1}^{\infty} \dfrac{y^n}{n!}+x+y\\
        &= (e^x-1)(e^y-1)+x+y.
\end{align*}
So
\begin{equation}
 \sum_{m,n=0}^{\infty} C(K_{m,n})
\dfrac{x^m}{m!}\dfrac{y^n}{n!}=
e^{(e^{x}-1)(e^{y}-1)+x+y}.\label{e-1}
\end{equation}
Knopfmacher and Mays \cite{aa} showed that
\begin{equation}\label{e-2}
C(K_{m,n})= \sum_{i=1}^{m+1} a_{m,i} i^n.
\end{equation}
for some integers $a_{m,i}$. We will derive the same result here
from \eqref{e-1}.

We start by defining integers $a_{m,i}$ by
\begin{equation}
\label{e-4} \lambda e^{(e^{x}-1)(\lambda-1)+x} =
\sum_{m=0}^{\infty} \dfrac{x^m}{m!} \sum_{i=0}^{\infty} a_{m,i}
\lambda^i.
\end{equation}
Now equating the constant term in $x$ on both sides we get
\[\lambda=\sum_{i=0}^{\infty}a_{0,i}\lambda^i,\]
which shows that $a_{0,1}=1$ and $a_{0,i}=0$ for $i \neq 1$. We
observe that $a_{m,0}=0$. Now we equate the coefficients of
$\lambda^n$ on both sides of \eqref{e-4}.

On the left we have
\begin{align*}
[\lambda^n] \lambda e^{(e^{x}-1)\lambda} e^{x-(e^{x}-1)} &=
[\lambda^{n-1}] e^{(e^{x}-1)\lambda} e^{x-(e^{x}-1)}\\ &=
\dfrac{(e^x - 1)^{n-1}}{(n-1)!} e^{x-(e^{x}-1)}
\end{align*}

On the right we have
\[ [\lambda^n]\sum_{m=0}^{\infty} \dfrac{x^m}{m!} \sum_{i=0}^{\infty} a_{m,i}
\lambda^i=\sum_{m=0}^{\infty} \dfrac{x^m}{m!} a_{m,n}.\]

So
\[\dfrac{(e^x - 1)^{n-1}}{(n-1)!} e^{x-(e^{x}-1)}=\sum_{m=0}^{\infty} \dfrac{x^m}{m!} a_{m,n}.\]
Thus $a_{m,n}=0$ for $m<n-1$, i.e., $n>m+1$. So we may write
\eqref{e-4} as
\begin{equation}
\label{e-5} \lambda e^{(e^{x}-1)(\lambda-1)+x} =
\sum_{m=0}^{\infty} \dfrac{x^m}{m!} \sum_{i=1}^{m+1} a_{m,i}
\lambda^i.
\end{equation}
Letting $\lambda=e^y$ in equation \eqref{e-5} and using
\eqref{e-1} we get
\[ \sum_{m,n} C(K_{m,n}) \dfrac{x^m}{m!} \dfrac{y^n}{n!}
=\sum_{m=0}^{\infty} \dfrac{x^m}{m!} \sum_{i=1}^{m+1} a_{m,i}
e^{iy}.\] Equating coefficients of $\dfrac{x^m}{m!}
\dfrac{y^n}{n!}$ we get
\[C(K_{m,n})=\sum_{i=1}^{m+1} a_{m,i} i^n,\]
which is the desired result.

\section{Relation with Stirling numbers of the second kind}

The Stirling number of the second kind $S(n,m)$ counts the number
of ways of partitioning a set of $n$ elements into $m$ nonempty
sets. We can also get an expression for $a_{m,i}$ involving the
Stirling numbers of the second kind. To do this
let\[\rho_m(z)=\sum_{i=1}^{m+1} a_{m,i} z^i.\] Then setting
$\lambda=z+1$ in \eqref{e-5} we get
\[(1+z)e^x e^{(e^{x}-1)z}=\sum_{m=0}^{\infty} \dfrac{x^m}{m!} \rho_m(1+z).\]
or
\[e^x e^{(e^{x}-1)z}=\sum_{m=0}^{\infty}  \dfrac{x^m}{m!} \dfrac{\rho_m(z+1)}{z+1}.\]
Using the previous equation gives
\begin{equation}
\label{e-8} \dfrac{d }{dx}e^{(e^{x}-1)z}=z e^x
e^{(e^{x}-1)z}=\sum_{m=0}^{\infty}  \dfrac{x^m}{m!}
\dfrac{z}{z+1}\rho_m(z+1).
\end{equation}
The generating function for the Stirling numbers of the second
kind is
\begin{equation}
\label{e-9} \sum_{m,k} S(m,k) \dfrac{x^m}{m!} z^k=e^{(e^{x}-1)z}.
\end{equation}
So from \eqref{e-8} and \eqref{e-9} we get
\[\sum_{m,k} S(m+1,k) \dfrac{x^m}{m!} z^k= \dfrac{d }{dx} \sum_{m,k} S(m,k) \dfrac{x^m}{m!} z^k=
 \sum_{m=0}^{\infty} \dfrac{x^m}{m!} \dfrac{z}{z+1}\rho_m(z+1).\]
Equating coefficients of $\dfrac{x^m}{m!}$ we get
\[\sum_{k} S(m+1,k)  z^k= \dfrac{z}{z+1}\rho_m(z+1).\]
So
\[\rho_m(z)= z \sum_{k} S(m+1,k)
(z-1)^{k-1}.\]

{}From this we can easily extract the coefficient of $z^i$ to get

\[ a_{m,i}=\sum_k \binom{k-1}{i} (-1)^{k-i} S(m+1,k)\]

\section{Generalization} The generalization of \eqref{e-1} to
complete multipartite graphs is easy if we use the generating
function method. A complete multipartite graph is a multipartite
graph such that any two vertices that are not in the same part
have an edge connecting them. The number of edges for such graphs
are given by the formula $a_1 (a_2+ \cdots +a_n)+a_2 (a_3+\cdots
+a_n)+\cdots +a_{n-1} a_n$, where each ${a_i}$ is the number of
vertices in that part. If $K_{a_1,a_2,\dots,a_n}$ is a complete
multipartite graph with $a_1 + a_2 + \cdots +a_n $ vertices then
the number of compositions for this graph is given by the
generating function
\begin{equation}
 \sum_{a_1,a_2,\dots,a_n=0}^{\infty}
C(K_{a_1,a_2,...,a_n})
\dfrac{{x_1}^{a_1}}{a_1!}\dfrac{{x_2}^{a_2}}{a_2!}\cdots\dfrac{{x_n}^{a_n}}{a_n!}=
y_1 y_2 \cdots y_n e^{y_1 y_2 \dots y_n-y_1- y_2 \cdots -y_n +
n-1},
\end{equation}
where $y_i = e^{x_i}.$\\ \break

\end{document}